\documentclass[12pt, reqno]{amsart}
\usepackage{amsmath, amsthm, amscd, amsfonts, amssymb, graphicx, color}
\usepackage[bookmarksnumbered, colorlinks, plainpages]{hyperref}

\textheight 22.5truecm \textwidth 14.5truecm
\setlength{\oddsidemargin}{0.35in}\setlength{\evensidemargin}{0.35in}

\setlength{\topmargin}{-.5cm}

\newtheorem{theorem}{Theorem}[section]

\newtheorem{proposition}[theorem]{Proposition}
\newtheorem{corollary}[theorem]{Corollary}
\theoremstyle{definition}

\newtheorem{example}[theorem]{Example}

\theoremstyle{remark}
\newtheorem{remark}[theorem]{Remark}
\numberwithin{equation}{section}

\newcommand{\HH}{\mathcal{H}}

\newcommand{\R}{\mathbb{R}}

\newcommand{\leqs}{\leqslant}
\newcommand{\geqs}{\geqslant}

\newcommand{\ap}{\alpha}

\newcommand{\ld }{\lambda}

\begin{document}
\setcounter{page}{1}

\title[Operator Monotone Functions on the Nonnegative Reals]{Integral Representations and Decompositions of Operator Monotone Functions on the Nonnegative Reals}

\author[P. Chansangiam]{Pattrawut Chansangiam$^1$$^{*}$}

\address{$^{1}$ Department of Mathematics, Faculty of Science, King Mongkut's Institute of Technology
Ladkrabang,  Bangkok 10520, Thailand.}
\email{\textcolor[rgb]{0.00,0.00,0.84}{kcpattra@kmitl.ac.th}}

\subjclass[2010]{Primary 47A63; Secondary 28A25.}

\keywords{Operator monotone function, Borel measure, integral representation.}

\date{Received: xxxxxx; Revised: yyyyyy; Accepted: zzzzzz.
\newline \indent $^{*}$ Corresponding author}

\begin{abstract}

In this paper, we show that there is a one-to-one correspondence between operator monotone functions 
on the nonnegative reals and
finite Borel measures on the unit interval.
This correspondence appears as an integral representation of special operator monotone functions $x \mapsto 1\,!_t\,x$
for $t \in [0,1]$ with respect to a finite Borel measure on $[0,1]$, here $!_t$ denotes the $t$-weighted harmonic mean.
Hence such functions form building blocks for arbitrary operator monotone functions on the nonnegative reals. 
Moreover, we use this integral representation to decompose operator monotone functions.

\end{abstract}

\maketitle

\section{Introduction}

\noindent
A useful and important class of real-valued functions is the class of operator monotone functions,
introduced by L\"{o}wner in a seminal paper \cite{Lowner}.
These functions are functions of Hermitian matrices/operators preserving order.
More precisely, a real-valued function $f$, defined on an interval $I$, is said to be \emph{operator monotone}  
 if for all Hilbert spaces $\HH$ and
for all Hermitian operators $A,B$ on $\HH$ whose spectra
are contained in $I$, 
\begin{align*}
	A \leqs B \implies f(A) \leqs f(B),
\end{align*}
where $f(A)$ is the function calculus of $f$ at $A$. 
An operator monotone function from $\R^+=[0,\infty)$ to itself
is also known as a complete Bernstein function or a Nevanlinna-Pick function for the half-line.
In that paper, he characterized operator monotonicity in terms of the positivity of matrix of divided differences and an important class of analytic functions, namely, Pick functions.
This concept is closely related to operator convex/concave functions which was studied afterwards by Kraus in \cite{Kraus}.
A function $f: I \to \R$ is said to be \emph{operator concave} if for all Hilbert spaces $\HH$ and
for all Hermitian operators $A,B$ on $\HH$ whose spectra
are contained in $I$, 
\begin{align*}
	f(tA + (1-t)B) \geqs tf(A) + (1-t)f(B), \quad t \in [0,1].
\end{align*}  
Operator monotone functions and operator convex/concave functions
arise naturally in matrix and operator inequalities (e.g. \cite{Ando}, \cite{Bhatia},
\cite{Bhatia_positive def matrices}, \cite{Zhan}).
It is remarkable that a function defined on $\R^+$ is operator monotone if and only if it is operator concave
(see also Hansen-Pedersen characterizations in \cite{Hansen-Pedersen}).
One of the most beautiful and important results in operator theory is the so-called L\"{o}wner-Heinz
inequality (see \cite{Heinz}, \cite{Lowner}) which is equivalent to the operator monotonicity/concavity of the function 
$x \mapsto x^p$ on $\R^+$ when $p \in [0,1]$.

Operator monotone functions on the nonnegative reals have applications in many areas, including mathematical physics
and electrical engineering.
They arise in analysis of electrical networks (see e.g. \cite{Anderson-Trapp}) 
and in problems of elementary particles (see e.g. \cite{Wigner-von Neumann}).
They play major roles in the so-called Kubo-Ando theory of operator connections and operator means (see \cite{Kubo-Ando}).
This axiomatic theory plays an important role in operator inequalities, operator equations, 
network theory and quantum information theory.
Indeed, there is a one-to-one correspondence between operator monotone functions
on $\R^+$ and operator connections. 
Moreover, each operator mean corresponds to a unique operator monotone function on $\R^+$
which is normalized in the sense that $f(1)=1$.
See more information in \cite{Bhatia}, \cite{Donoghue}
and \cite{Hiai-Yanagi}.

Every operator monotone function from $\R^+$ to $\R^+$ admits the following integral representation:
\begin{proposition}[\cite{Lowner}] \label{thm: Lowner}
    A continuous function $f: \R^+ \to \R^+$ is operator monotone if and only if there is a unique
    finite Borel measure $\nu$ on $[0,\infty]$ such that
\begin{align}
		f(x) = \int_{[0,\infty]} \frac{x(\ld+1)}{x+\ld} \,d \nu(\ld), \quad x \in \R^+. \label{eq: int rep of OMF}
\end{align}
\end{proposition}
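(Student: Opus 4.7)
The plan is to prove the two directions of the equivalence separately and then invoke uniqueness from a classical representation.

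For sufficiency, I would first verify that for each fixed $\ld \in [0, \infty]$ the kernel
\[
\ph_\ld(x) \;:=\; \frac{x(\ld+1)}{x+\ld},
\]
with the natural conventions $\ph_0 \equiv 1$ and $\ph_\infty(x) = x$ obtained from $\ld \to 0^+$ and $\ld \to \infty$, is operator monotone on $\R^+$. For $\ld \in (0,\infty)$ the algebraic identity $\ph_\ld(x) = (\ld+1) - \ld(\ld+1)/(x+\ld)$ reduces this to the classical operator monotonicity of $x \mapsto -(x+\ld)^{-1}$. Given any finite Borel measure $\nu$ on $[0,\infty]$ and Hermitians $A \leqs B$ with spectra in $\R^+$, the inequality $\ph_\ld(A) \leqs \ph_\ld(B)$ holds pointwise in $\ld$, and testing against a fixed vector and applying Fubini against $\nu$ yields $f(A) \leqs f(B)$.

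For necessity, which is the main content, I would invoke L\"owner's classical theorem: every operator monotone $f : \R^+ \to \R^+$ extends analytically to a Pick (Nevanlinna) function on the upper half-plane $\CC^+$. Since $f$ is real-valued on $(0,\infty)$, Schwarz reflection extends $f$ further to the slit plane $\CC \setminus (-\infty, 0]$. The Herglotz--Nevanlinna integral theorem, combined with this analytic continuation and the hypothesis $f \geqs 0$ on $(0,\infty)$, then delivers $\ap, \bt \geqs 0$ and a positive Borel measure $\sigma$ on $(0,\infty)$ with $\int(1+s)^{-1} d\sigma(s) < \infty$ such that
\[
f(x) \;=\; \ap \;+\; \bt x \;+\; \int_{(0,\infty)} \frac{x}{x+s} \, d\sigma(s), \quad x \in \R^+
\]
(the so-called complete Bernstein representation).

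The remaining step is a change of normalization on the compactified parameter space $[0,\infty]$: set $d\nu(s) = (1+s)^{-1}\, d\sigma(s)$ on $(0,\infty)$, and add point masses $\nu(\{0\}) = \ap$ and $\nu(\{\infty\}) = \bt$. Since $x/(x+s) = (1+s)^{-1}\ph_s(x)$ while $\ph_0 \equiv 1$ and $\ph_\infty(x) = x$, a direct computation converts the complete Bernstein representation into \eqref{eq: int rep of OMF}. The integrability $\int(1+s)^{-1} d\sigma(s) < \infty$ is precisely finiteness of $\nu$ on $[0,\infty]$, and the total mass $\nu([0,\infty]) = f(1)$ gives a natural normalization. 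Uniqueness of $\nu$ is inherited from the uniqueness of the Herglotz--Nevanlinna measure (via Stieltjes inversion). The main obstacle is necessity, where the heavy lifting is L\"owner's bridge from operator monotonicity to Pick functions together with the Herglotz--Nevanlinna representation; the subtle point is verifying that the hypothesis $f \geqs 0$ on $(0,\infty)$ upgrades the weak Nevanlinna integrability $(1+s^2)^{-1}$ to the stronger $(1+s)^{-1}$, which is exactly what makes the compactified measure $\nu$ finite.
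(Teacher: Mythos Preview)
The paper does not supply a proof of this proposition at all: it is quoted as a classical result of L\"owner (with the citation \cite{Lowner}) and then used as a black box in the proof of Theorem~\ref{thm: int rep of f}. So there is no ``paper's own proof'' to compare against.

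That said, your sketch is a correct outline of the standard argument. The sufficiency direction is fine: the identity $\ph_\ld(x) = (\ld+1) - \ld(\ld+1)/(x+\ld)$ indeed reduces operator monotonicity of each kernel to that of $x \mapsto -(x+\ld)^{-1}$, and integrating the resulting operator inequalities against a finite measure is unproblematic. For necessity you are invoking exactly the right machinery (L\"owner's bridge to Pick functions, followed by the Herglotz--Nevanlinna representation specialized to functions analytic across $(0,\infty)$ and nonnegative there, which yields the complete Bernstein form). Your repackaging step $d\nu(s) = (1+s)^{-1}\,d\sigma(s)$ on $(0,\infty)$ together with atoms $\nu(\{0\}) = \ap$, $\nu(\{\infty\}) = \bt$ is precisely the normalization that turns the complete Bernstein representation into \eqref{eq: int rep of OMF}, and your observation that $\nu([0,\infty]) = f(1)$ is correct. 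The one point you flag as ``subtle'' --- upgrading $(1+s^2)^{-1}$ integrability to $(1+s)^{-1}$ integrability --- is genuine but standard: it follows from the nonnegativity of $f$ near $0^+$, which forces the constant term in the Nevanlinna representation to absorb the potentially divergent part of the integral. Uniqueness via Stieltjes inversion is also the right route.
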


In this paper, we characterize operator monotone functions on $\R^+$ in terms of Borel measures on $[0,1]$.
The cone of such functions, denoted by $OM(\R^+)$, is equipped with the pointwise order.
The cone of finite Borel measures on $[0,1]$ is also equipped with pointwise order.
Denote the $t$-weighted harmonic mean by $!_t$. 
The main result of this paper states that there is a one-to-one correspondence between $OM(\R^+)$ 
and finite Borel measures on $[0,1]$ via a suitable form of integral representation:

\begin{theorem} \label{thm: int rep of f}
Given a finite Borel measure $\mu$ on $[0,1]$, the function 
\begin{align}
		f(x) = \int_{[0,1]} 1 \,!_t\, x \,d \mu(t), \quad x \geqs 0 \label{eq: int rep f}
\end{align}
is an operator monotone function from $\R^+$ to $\R^+$.
In fact, every operator monotone function from $\R^+$
to $\R^+$ arises in  this form.
Moreover, the map $\mu \mapsto f$ is bijective, affine and order-preserving.
\end{theorem}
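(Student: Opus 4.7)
The plan is to reduce the theorem to L\"owner's integral representation (Proposition~\ref{thm: Lowner}) through a change of variable that identifies $[0,\infty]$ with $[0,1]$. The key ingredient is the explicit scalar formula for the weighted harmonic mean,
\begin{align*}
1 \,!_t\, x \;=\; \bigl((1-t)\cdot 1^{-1} + t \cdot x^{-1}\bigr)^{-1} \;=\; \frac{x}{(1-t)x + t}, \qquad x>0,\ t \in [0,1],
\end{align*}
extended by continuity at the boundary so that $1\,!_0\,x = 1$ and $1\,!_1\,x = x$.

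Under the substitution $t = \lambda/(\lambda+1)$, equivalently $\lambda = t/(1-t)$, a direct computation yields
\begin{align*}
1 \,!_t\, x \;=\; \frac{x(\lambda+1)}{x+\lambda},
\end{align*}
which is precisely the L\"owner kernel in \eqref{eq: int rep of OMF}. The map $\phi \colon [0,\infty] \to [0,1]$, $\phi(\lambda) = \lambda/(\lambda+1)$ (with $\phi(\infty)=1$), is a homeomorphism, so the pushforward $\phi_*$ induces a bijection between finite Borel measures on $[0,\infty]$ and on $[0,1]$. Given a finite Borel measure $\mu$ on $[0,1]$, setting $\nu := (\phi^{-1})_*\mu$ and applying the change-of-variables formula transforms \eqref{eq: int rep f} into \eqref{eq: int rep of OMF}, so Proposition~\ref{thm: Lowner} shows that $f$ is operator monotone from $\R^+$ to $\R^+$. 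Conversely, given any such $f$, Proposition~\ref{thm: Lowner} supplies a unique L\"owner measure $\nu$, and $\mu := \phi_*\nu$ realizes $f$ in the form \eqref{eq: int rep f}; uniqueness of $\mu$ follows from uniqueness of $\nu$ combined with $\phi_*$ being a bijection.

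Affinity of $\mu \mapsto f$ is immediate from the linearity of integration, and order-preservation ($\mu_1 \leqs \mu_2 \Rightarrow f_1 \leqs f_2$) follows instantly from the nonnegativity of the kernel $1 \,!_t\, x \geqs 0$. The main obstacle I anticipate is the careful handling of the boundary atoms $t \in \{0,1\}$ in the measure-theoretic change of variables: the integrand $1 \,!_t\, x$ requires limiting interpretations at these points that must match the L\"owner kernel at $\lambda \in \{0,\infty\}$, and one must confirm that the pushforward construction respects these atoms consistently in both directions. Once this bookkeeping is settled, the proof reduces to a direct translation of L\"owner's theorem through the homeomorphism $\phi$.
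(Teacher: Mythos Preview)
Your proposal is correct and follows essentially the same route as the paper: both arguments rest on the homeomorphism $\phi(\lambda)=\lambda/(\lambda+1)$ (the paper's $\Psi$) to transport measures between $[0,\infty]$ and $[0,1]$ and then invoke Proposition~\ref{thm: Lowner} for surjectivity and injectivity. The only cosmetic difference is that the paper verifies operator monotonicity of $f$ directly from the monotonicity of the weighted harmonic mean rather than by first converting to the L\"owner form, but this does not change the substance of the argument.
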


Thus the functions $x \mapsto 1\,!_t\,x$ for $t \in [0,1]$ form building blocks 
for arbitrary operator monotone functions on $\R^+$.
Moreover, a function $f \in OM(\R^+)$ is normalized if and only if $\mu$ is a probability measure.
This means that every normalized operator monotone function on $\R^+$ can be viewed as an average of the special operator monotone functions $x \mapsto 1\,!_t\,x$ for
$t \in [0,1]$.
The integral representation \eqref{eq: int rep f} also has advantages in treating
decompositions of operator monotone functions.
It turns out that every function $f \in OM(\R^+)$ can be expressed as
\begin{align*}
	f = f_{ac} + f_{sd} + f_{sc}
\end{align*}
where $f_{ac}$, $f_{sd}$ and $f_{sc}$ also belong to the class $OM(\R^+)$.
The ``singularly discrete part" $f_{sd}$ is a countable sum of $x \mapsto 1\,!_t\,x$ for $t \in [0,1]$ with nonnegative coefficients.
The ``absolutely continuous part" $f_{ac}$ has an integral representation
with respect to Lebesgue measure $m$ on $[0,1]$.
The ``singularly continuous part" $f_{sc}$ has an integral representation with respect to a continuous measure
mutually singular to $m$.

In Section 2, we establish suitable integral representations for operator monotone functions on $\R^+$.
Their decompositions are treated in Section 3.

\section{Integral representations of operator monotone functions}

In this section, we characterize operator monotone functions from $\R^+$ to $\R^+$
in terms of Borel measures on $[0,1]$.
Indeed, every operator monotone function on $\R^+$ admits
a canonical representation as an intergral of special operator monotone functions $x \mapsto 1\,!_t\,x$ for $t \in [0,1]$
with respect to a finite Borel measures on $[0,1]$.
Here is the proof of this fact.

\begin{proof}[Proof of Theorem \ref{thm: int rep of f}]
The function $f$ in \eqref{eq: int rep f} is well-defined since $\mu$ is finite and, for each $x \geqs 0$, $1 \,!_t\, x$
is bounded by $\max(1,x)$ for all $t \in [0,1]$.
To show that $f$ is operator monotone, consider positive operators $A,B$ on a Hilbert space 
such that $A \leqs B$.
Since the weighted harmonic mean $A \,!_t\, B = [(1-t)A^{-1}+tB^{-1}]^{-1}$ are monotone, we have
\begin{align*}
	f(A) = \int_{[0,1]} I \,!_t\, A \,d \mu(t) \leqs \int_{[0,1]} I \,!_t\, B \,d \mu(t) = f(B).
\end{align*}
This show that the map $\mu \mapsto f$ is well-defined.
For injectivity of this map, let $\mu_1$ and $\mu_2$ be finite Borel measures on $[0,1]$ such that
$f_1 = f_2$ where 
\begin{align*}
	f_1(x) = \int_{[0,1]} 1\,!_t\, x\, d\mu_1(t), \quad 
    f_2(x) = \int_{[0,1]} 1 \,!_t\, x\, d\mu_2(t), \quad x \geqs 0.
\end{align*}
Then, for each $x \geqs 0$, 
\begin{align*}
    f_i (x) = \int_{[0,\infty]} \frac{x(\ld+1)}{x+\ld} \,d\mu_i \Psi(\ld) \quad i=1,2
\end{align*}
where $\Psi: [0,\infty] \to [0,1]$, $t \mapsto t/(t+1)$ and the measure $\mu_i \Psi$ is defined by
$E \mapsto \mu_i (\Psi(E))$ for each Borel set $E$.
Proposition \ref{thm: Lowner} implies that $\mu_1 = \mu_2$.

For surjectivity of this map, consider $f \in OM(\R^+)$.
By Proposition \ref{thm: Lowner}, there is a finite Borel measure $\nu$ on $[0,\infty]$ such that
\eqref{eq: int rep of OMF} holds.
Define a finite Borel measure $\mu$ on $[0,1]$ by $\mu = \nu \Psi^{-1}$.
A computation shows that
\begin{align*}
		f(x) = \int_{[0,1]} 1 \,!_t\, x \,d \mu(t), \quad x \geqs 0.
\end{align*}
Hence the map $\mu \mapsto f$ is surjective.
It is easy to see that this map is affine and order-preserving.
\end{proof}

The measure $\mu$ in the previous theorem is called the \emph{associated measure} of
the operator monotone function $f$.

\begin{remark}
	The map $f \mapsto \mu$ is not order-preserving in general. 
    Consider $f(x)=2x/(1+x)$ and $g(x)=(1+x)/2$.
	Then $\mu_f = \delta_{1/2}$ and $\mu_g = (\delta_0 + \delta_1)/2$. 
    We have $f \leqs g$ but $\mu_f \nleqslant \mu_g$.
\end{remark}

\begin{example} \label{ex: assoc measure of HM}
	The operator monotone function $x \mapsto 1 \,!_t\,x$ corresponds to the Dirac measure $\delta_t$ at $t \in [0,1]$.
	In particular, the operator monotone functions $x \mapsto 1$ and $x \mapsto x$ correspond to
	the measures $\delta_0$ and $\delta_1$, respectively.
	By affinity of the map $\mu \mapsto f$, the measure
    $\sum_{i=1}^n a_i \, \delta_{t_i}$,
    where $t_i \in [0,1]$ and $a_i\geqs 0$, is associated to the function $x \mapsto \sum_{i=1}^n a_i (1 \, !_{t_i} \, x)$.
\end{example}

\begin{example} \label{ex: assoc measure of log}
	Consider the operator monotone function $x \mapsto \log{(1+x)}$. 
	An elementary calculation shows that
	\begin{align*}
		\log{(1+x)} = \int_1^{\infty} \frac{x}{x+\ld}\,d\ld.
	\end{align*}
	Hence its associated measure is given by $\displaystyle \frac{1}{t} \chi_{[1/2,1]}(t) \,dt$.
\end{example}

\begin{example} \label{ex: assoc measure of weight geometric}
  Consider the associated measure of the operator monotone function $x^{\ap}$ for $0<\ap<1$.
A standard result from contour integrations says that
\begin{align*}
    x^{\ap} =  \int_{[0,\infty]} \frac{x \ld^{\ap -1}}{x+\ld} \cdot \frac{\sin \ap \pi}{\pi}  \, d\ld.
\end{align*}
It follows that the associated measure of this function is given by
\begin{align*}
    d\mu(t) = \frac{\sin \ap \pi}{\pi} \cdot  \frac{1}{t^{1-\ap} (1-t)^{\ap}} \,dt.
\end{align*}
\end{example}

\begin{example}\label{ex: assoc measure of LM}
	Let us compute the associated measure of  the operator monotone function
	$f(x) = (x-1) / \log{x}$.
	By Example \ref{ex: assoc measure of weight geometric} and Fubini's theorem, we have
	\begin{align*}
		f(x) &= \int_0^1 x^{\ld} \,d \ld
			\, = \, \int_0^1 \int_0^1 \frac{\sin{\ld \pi}}{\pi} \cdot
                    \frac{1 \,!_{t}\, x}{t^{1- \ld}(1-t)^{\ld}} \,dt \,d \ld \\
			&= \int_0^1 (1 \,!_{t}\, x) \int_0^1 \frac{\sin{\ld \pi}}{\pi t^{1- \ld}(1-t)^{\ld}}
			\,d\ld \,dt.
	\end{align*}
	Hence the associated measure of $f$ is the measure $d \mu(t) = g(t) \,dt$ where
	the density function $g$ is given by
	\begin{align*}
		g(t) &= \int_0^1 \frac{\sin{\ld \pi}}{\pi t^{1-\ld} (1-t)^{\ld}} \,d\ld
				 = \frac{1}{t(1-t) \left(\pi^2 + \log^2 (\frac{t}{1-t}) \right) }.
	\end{align*}
	Now, consider the \emph{dual} of the function $f$, defined by
	\begin{align*}
		x \mapsto  \frac{1}{f(1/x)} = \frac{x}{x-1} \log{x}.
	\end{align*}
	This operator monotone function has Lebesgue measure as the associated measure, equivalently,
	we have the integral representation
	\begin{align*}
			\frac{x}{x-1} \log{x} = \int_{[0,1]} 1 \,!_t x \,dt.
	\end{align*}
\end{example}

\begin{remark}
Theorem \ref{thm: int rep of f} implies a short proof of the fact that a continuous function $f: \R^+ \to \R^+$ 
is operator monotone function if and only if it is operator concave as follows.
If $f$ is operator monotone, then it has the integral representation \eqref{eq: int rep f}
which implies that $f$ is operator concave since the weighted harmonic means are concave.
Now, suppose that $f$ is operator concave. Consider positive operators $A,B$ on a Hilbert space and let $t \in [0,1]$. 
				Since $tA = tB+(1-t)t(1-t)^{-1}(A-B)$, the operator concavity implies that
				\begin{align*}
					f(tA) \geqs tf(B)+(1-t)f(t(1-t)^{-1}(A-B)).
				\end{align*}
				Since $f \geqs 0$, we have $f(t(1-t)^{-1}(A-B) \geqs 0$ and $f(tA) \geqs f(tB)$. 
				Letting $t \nearrow 1$ yields $f(A) \geqs f(B)$.
\end{remark}

	A function $f\in OM(\R^+)$ is said to be normalized if $f(1)=1$.

\begin{corollary}
	The functions $x \mapsto 1 \,!_t\, x$ for $t \in [0,1]$ are extreme points of the convex set of normalized
	operator monotone functions from $\R^+$ to $\R^+$.
\end{corollary}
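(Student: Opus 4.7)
The plan is to transport the question across the bijection of Theorem \ref{thm: int rep of f} and reduce it to the well-known fact that Dirac measures are the extreme points of the set of probability measures on $[0,1]$.

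First I would identify the image of the normalization constraint under the map $\mu \mapsto f$. Since $1 \,!_t\, 1 = 1$ for every $t \in [0,1]$, evaluating the representation \eqref{eq: int rep f} at $x=1$ gives $f(1) = \mu([0,1])$. Hence $f$ is normalized if and only if its associated measure $\mu$ is a probability measure on $[0,1]$. So the convex set $N$ of normalized functions in $OM(\R^+)$ corresponds bijectively to the convex set $P$ of Borel probability measures on $[0,1]$.

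Next I would invoke the fact that the correspondence $\mu \mapsto f$ is affine (already established in Theorem \ref{thm: int rep of f}) and bijective. An affine bijection between convex sets carries extreme points to extreme points: if $f = 1\,!_t\,x$ were a nontrivial convex combination $\lambda f_1 + (1-\lambda)f_2$ in $N$ with associated measures $\mu_1,\mu_2 \in P$, then by injectivity and affinity the Dirac measure $\delta_t$ would be the corresponding combination $\lambda \mu_1 + (1-\lambda)\mu_2$, forcing $\mu_1 = \mu_2 = \delta_t$ and hence $f_1 = f_2 = f$.

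Finally I would appeal to the standard fact that the extreme points of $P$ are precisely the Dirac point masses $\delta_t$ for $t \in [0,1]$ (a quick internal argument: if $\delta_t = \lambda \mu_1 + (1-\lambda)\mu_2$ with $0<\lambda<1$, then both $\mu_i$ are supported at $\{t\}$ and, being probability measures, equal $\delta_t$). Combined with Example \ref{ex: assoc measure of HM}, which identifies $\delta_t$ as the associated measure of $x \mapsto 1\,!_t\,x$, this yields the corollary. There is no real obstacle here; the only thing to be careful about is ensuring the translation of the normalization condition, and this is immediate from $1\,!_t\,1=1$.
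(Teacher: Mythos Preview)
Your proposal is correct and follows essentially the same route as the paper's proof: both use the affine bijection of Theorem~\ref{thm: int rep of f} to transport the statement to the fact that Dirac measures are the extreme points of the probability Borel measures on $[0,1]$. You simply spell out in more detail the identification of normalized functions with probability measures (via $1\,!_t\,1=1$) and the preservation of extreme points under affine bijections, both of which the paper leaves implicit.
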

\begin{proof}
    It is easy to see that the set of normalized operator monotone functions is closed under convex combinations. 
	To obtain extreme points, apply Theorem \ref{thm: int rep of f} to the fact that 
    the Dirac measures are extreme points of the convex set of probability Borel measures on $[0,1]$.
\end{proof}

\begin{corollary} \label{cor: mean iff prob meas}
	An operator monotone function $f: \R^+ \to \R^+$ is normalized if and only if
	its associated measure is a probability measure.
	Hence	there is a one-to-one correspondence between normalized operator monotone function from $\R^+$ to $\R^+$ 
	and probability Borel measures on $[0,1]$ via the integral representation \eqref{eq: int rep f}.
\end{corollary}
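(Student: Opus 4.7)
The plan is to read off the statement directly from Theorem \ref{thm: int rep of f} by evaluating the integral representation at the point $x=1$. Given $f\in OM(\R^+)$ with associated measure $\mu$, we have
\begin{align*}
f(1) \;=\; \int_{[0,1]} 1 \,!_t\, 1 \,d\mu(t).
\end{align*}
The key observation I would verify first is that $1\,!_t\,1 = 1$ for every $t\in[0,1]$; this is immediate from the defining formula $a\,!_t\,b = [(1-t)a^{-1}+tb^{-1}]^{-1}$ applied to $a=b=1$. Consequently $f(1) = \mu([0,1])$, so the normalization condition $f(1)=1$ is equivalent to $\mu$ being a probability measure on $[0,1]$.

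For the second assertion, I would simply invoke the bijection $\mu \mapsto f$ from Theorem \ref{thm: int rep of f} and restrict it to the subset of probability measures. Since the equivalence just established identifies the preimage of normalized operator monotone functions with precisely the set of probability Borel measures on $[0,1]$, and the map is already known to be a bijection on the ambient spaces, its restriction gives the desired one-to-one correspondence.

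There is essentially no obstacle here: the entire content of the corollary is the trivial identity $1\,!_t\,1 = 1$ combined with the previously established bijectivity. The only minor care needed is to make sure the evaluation $f(1)$ can be passed through the integral, which is justified by the bound $1\,!_t\,x \leqs \max(1,x)$ used in the proof of Theorem \ref{thm: int rep of f} (so dominated convergence or direct finiteness applies at $x=1$, where the integrand is simply the constant $1$).
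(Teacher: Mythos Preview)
Your argument is correct and is exactly the intended one: the paper's own proof simply says it follows from the integral representation in Theorem~\ref{thm: int rep of f}, which unpacks to precisely the evaluation $f(1)=\int_{[0,1]} 1\,!_t\,1\,d\mu(t)=\mu([0,1])$ that you wrote out. Your more explicit justification of $1\,!_t\,1=1$ and of the restriction of the bijection is fine and adds clarity, but nothing substantively different is happening.
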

\begin{proof}
    It follows from the integral representation \eqref{eq: int rep f} in Theorem \ref{thm: int rep of f}. 
\end{proof}

This corollary means that every normalized operator monotone function on $\R^+$ can be regarded as an average of the 
special operator monotone functions $x \mapsto 1\,!_t\,x$ for
$t \in [0,1]$.

\begin{corollary}
	The followings are equivalent for a function $f:\R^+ \to \R^+$:
	\begin{enumerate}
		\item[(i)]	$f$ is operator monotone;
		\item[(ii)]	$g$ is operator monotone where $g(x)=xf(1/x)$ for $x>0$ and $g(0)=\lim_{x \downarrow 0} g(x)$;
	\end{enumerate}
Moreover, the associated measure of $g$ is given by
$\mu \Theta $ where $\Theta  : [0,1] \to [0, 1]$, $t \mapsto 1-t$.
\end{corollary}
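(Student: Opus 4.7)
The plan is to invoke Theorem \ref{thm: int rep of f} and reduce the claim to a change of variables $t \mapsto 1-t$. Assume (i). By Theorem \ref{thm: int rep of f}, $f$ has the representation $f(x) = \int_{[0,1]} 1\,!_t\,x\,d\mu(t)$ for a unique finite Borel measure $\mu$ on $[0,1]$. First I would write the weighted harmonic mean explicitly as
\begin{align*}
1\,!_t\,x \,=\, \frac{x}{(1-t)x + t}, \quad x > 0,\ t \in [0,1],
\end{align*}
so that the value and behavior at the endpoints $t=0,1$ and $x=0$ can be tracked.

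Substituting $1/x$ for $x$ in the representation and multiplying through by $x$, I obtain
\begin{align*}
g(x) \,=\, xf(1/x) \,=\, \int_{[0,1]} \frac{x}{(1-t) + tx}\,d\mu(t) \,=\, \int_{[0,1]} 1\,!_{1-t}\,x\,d\mu(t).
\end{align*}
Recognizing the right-hand side as integration against the push-forward of $\mu$ under the self-homeomorphism $\Theta: t \mapsto 1-t$ of $[0,1]$, I rewrite this as
\begin{align*}
g(x) \,=\, \int_{[0,1]} 1\,!_s\,x\,d(\mu\Theta)(s).
\end{align*}
Since $\mu\Theta$ is again a finite Borel measure on $[0,1]$, Theorem \ref{thm: int rep of f} simultaneously yields (ii) and identifies the associated measure of $g$ as $\mu\Theta$.

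For the converse (ii)$\Rightarrow$(i), the key observation is that the operation $f \mapsto g$ is an involution: for $x > 0$ one has $x\,g(1/x) = x\cdot(1/x)f(x) = f(x)$, and the definition $g(0)=\lim_{x\downarrow 0}g(x)$ makes the boundary values compatible. Hence applying the already-proved implication to $g$ recovers operator monotonicity of $f$. The main obstacle is purely bookkeeping: one must verify that $g$ is well-defined at $x = 0$ by checking $\lim_{x\downarrow 0} g(x) = \mu(\{1\}) = (\mu\Theta)(\{0\})$, which follows from dominated convergence (the integrand is bounded by $1$ and converges pointwise to $\chi_{\{1\}}$), and confirm the change-of-variables step, which is immediate because $\Theta$ is a homeomorphism of $[0,1]$.
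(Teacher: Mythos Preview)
Your proof is correct and follows essentially the same route as the paper's: both use the integral representation from Theorem \ref{thm: int rep of f}, compute $xf(1/x)$ inside the integral, recognize the result as $\int 1\,!_{1-t}\,x\,d\mu(t)=\int 1\,!_s\,x\,d(\mu\Theta)(s)$, and handle the converse via the involution $f(x)=xg(1/x)$. The only differences are cosmetic---you expand $1\,!_t\,x=x/((1-t)x+t)$ explicitly where the paper uses the homogeneity identity $x(1\,!_t\,x^{-1})=x\,!_t\,1=1\,!_{1-t}\,x$, and you justify the value at $x=0$ by dominated convergence where the paper simply says ``use continuity.''
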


\begin{proof}
		(i) $\Rightarrow$ (ii). 	From the integral representation \eqref{eq: int rep f} of $f$,
	we have
	\begin{align*}
		x f(\frac{1}{x})
		&= x \int_{[0,1]} 1 \,!_t\, \frac{1}{x}\, d\mu(t)
		= \int_{[0,1]} x \,!_t\, 1\, d\mu(t) \\
		&= \int_{[0,1]} x \,!_{1-t} \, 1 \, d\mu(1-t) = \int_{[0,1]} 1 \,!_{t}\, x\, d\mu\Theta(t)
	\end{align*}
	for each $x>0$. For $x=0$, use continuity. By Theorem \ref{thm: int rep of f}, 
	$g$ is operator monotone with associated measure $\mu \Theta$.
	
	(ii) $\Rightarrow$ (i). Since $f(x)=xg(1/x)$, we can apply the implication (i) $\Rightarrow$ (ii).
\end{proof}

	The function $g$ in this corollary is called the \emph{transpose} of $f$.
	We say that $f$ is \emph{symmetric} if it coincides with its transpose.
	A Borel measure $\mu$ on $[0,1]$ is said to be \emph{symmetric} if $\mu$ is invariant under $\Theta$,
	i.e. $\mu \Theta = \mu$.

\begin{corollary}	\label{cor: symmetric OMF}
	There is a one-to-one correspondence between symmetric operator monotone function from $\R^+$ to $\R^+$
	 and finite symmetric Borel measures
	 via the integral representation
	\begin{align}
		f(x) = \frac{1}{2} \int_{[0,1]} (1 \,!_t\, x)  +  (x \,!_t\, 1) \,d \mu(t), \quad x \geqs 0.
		 \label{int rep of symmetric OM}
	\end{align}
	In particular, an operator monotone function on $\R^+$ is symmetric if and only if 
    its associated measure is symmetric.
\end{corollary}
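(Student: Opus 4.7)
The plan is to reduce everything to the preceding corollary, which identified the associated measure of the transpose $g$ of $f$ as $\mu\Theta$, together with the injectivity part of Theorem \ref{thm: int rep of f}. The ``In particular'' clause falls out immediately: if $f$ has associated measure $\mu$, then $f$ is symmetric, i.e.\ coincides with its transpose, iff $\mu = \mu\Theta$, and by definition this says exactly that $\mu$ is symmetric. So the main content is verifying that the alternative integral representation \eqref{int rep of symmetric OM} truly parametrises the symmetric operator monotone functions by symmetric measures.

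The key computational observation I would make first is the elementary identity
\begin{align*}
1 \,!_{1-t}\, x \;=\; \frac{x}{tx+(1-t)} \;=\; x \,!_t\, 1, \qquad t\in[0,1],\ x\geqs 0,
\end{align*}
which follows directly from $A\,!_s\,B = [(1-s)A^{-1}+sB^{-1}]^{-1}$. Using the change of variables formula $\int h(t)\,d(\mu\Theta)(t) = \int h(1-t)\,d\mu(t)$, this identity gives
\begin{align*}
\int_{[0,1]} x\,!_t\,1 \,d\mu(t) \;=\; \int_{[0,1]} 1\,!_{1-t}\,x\,d\mu(t) \;=\; \int_{[0,1]} 1\,!_t\,x\,d(\mu\Theta)(t).
\end{align*}

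From here the forward direction is immediate. Suppose $\mu$ is a finite symmetric Borel measure. Then $\mu\Theta=\mu$, so the displayed computation shows $\int 1\,!_t\,x\,d\mu = \int x\,!_t\,1\,d\mu$, and averaging the two equal expressions yields \eqref{int rep of symmetric OM} with the associated measure of $f$ being exactly $\mu$; by the previous corollary $f$ is symmetric. Conversely, given an arbitrary finite Borel measure $\mu$ for which \eqref{int rep of symmetric OM} holds, the same identity rewrites the right-hand side as
\begin{align*}
f(x) \;=\; \int_{[0,1]} 1\,!_t\,x \, d\Bigl(\tfrac{\mu+\mu\Theta}{2}\Bigr)(t),
\end{align*}
so the associated measure of $f$ is the symmetric measure $\tilde\mu:=(\mu+\mu\Theta)/2$; by Theorem \ref{thm: int rep of f} this identifies $\tilde\mu$ uniquely, and by the preceding corollary $f$ is symmetric iff $\tilde\mu$ is symmetric, which it is by construction.

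Bijectivity, affinity and order-preservation of $\mu\mapsto f$ restricted to the symmetric cones are inherited from the corresponding properties of the map in Theorem \ref{thm: int rep of f}, once one notes that in the symmetric case $\tilde\mu=\mu$ so the restricted map is still injective. The only genuine obstacle is the identity $1\,!_{1-t}\,x = x\,!_t\,1$; everything else is bookkeeping via the change-of-variables formula and an appeal to the established bijection.
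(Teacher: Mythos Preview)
Your proof is correct. The paper states this corollary without proof, leaving it as an immediate consequence of the preceding corollary (which identifies the associated measure of the transpose as $\mu\Theta$) together with Theorem~\ref{thm: int rep of f}; your argument supplies exactly these details via the identity $1\,!_{1-t}\,x = x\,!_t\,1$ and the change-of-variables formula, so your approach matches the paper's implicit intent.
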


\begin{corollary} \label{cor: normalized symmetric OMF}
	There is a one-to-one correspondence between normalized symmetric operator monotone functions on 
    $\R^+$ and probability symmetric Borel measures on the unit interval via the integral representation 
    \eqref{int rep of symmetric OM}.
\end{corollary}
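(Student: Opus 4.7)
The plan is to derive this corollary by intersecting the two bijective correspondences already established: Corollary \ref{cor: mean iff prob meas} matches normalized operator monotone functions on $\R^+$ with probability Borel measures on $[0,1]$, while Corollary \ref{cor: symmetric OMF} matches symmetric operator monotone functions with finite symmetric Borel measures. Both correspondences are special cases of the same underlying bijection $\mu \mapsto f$ of Theorem \ref{thm: int rep of f}, so the two restrictions can be imposed simultaneously.

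First, I would handle the bijection itself. If $f$ is normalized and symmetric, then its associated measure $\mu$ is a probability measure by Corollary \ref{cor: mean iff prob meas} and is symmetric by Corollary \ref{cor: symmetric OMF}; hence $\mu$ is a probability symmetric Borel measure. Conversely, starting from such a $\mu$, Theorem \ref{thm: int rep of f} supplies an operator monotone $f$ via \eqref{eq: int rep f}, and the same two corollaries force $f$ to be normalized and symmetric. Injectivity and surjectivity are inherited directly from the ambient bijection of Theorem \ref{thm: int rep of f}.

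Second, I would verify that on this restricted class the representation \eqref{eq: int rep f} can be rewritten in the manifestly symmetric form \eqref{int rep of symmetric OM}. Using the symmetry $\mu \Theta = \mu$ and the identity $1 \,!_{1-t}\, x = x \,!_t\, 1$, a change of variables $t \mapsto 1-t$ gives
\begin{align*}
\int_{[0,1]} 1 \,!_t\, x \,d\mu(t) \;=\; \int_{[0,1]} 1 \,!_{1-t}\, x \,d\mu(t) \;=\; \int_{[0,1]} x \,!_t\, 1 \,d\mu(t),
\end{align*}
and averaging these two equal expressions yields \eqref{int rep of symmetric OM}. I do not expect a serious obstacle here; the only point worth care is making the change-of-variables step rigorous by invoking the pushforward definition $\mu\Theta(E) = \mu(\Theta(E))$ used throughout the paper, after which the corollary falls out as an immediate specialization of Corollaries \ref{cor: mean iff prob meas} and \ref{cor: symmetric OMF}.
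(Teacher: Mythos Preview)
Your proposal is correct and matches the paper's approach: the paper gives no separate proof for this corollary, treating it as an immediate consequence of Corollaries \ref{cor: mean iff prob meas} and \ref{cor: symmetric OMF}, which is precisely the combination you outline. Your additional step verifying that the representation \eqref{eq: int rep f} rewrites as \eqref{int rep of symmetric OM} via the change of variables $t \mapsto 1-t$ simply makes explicit the content already contained in Corollary \ref{cor: symmetric OMF}.
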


\begin{corollary}
	The functions $x \mapsto (1 \, !_t\, x \,+\, x \, !_t\, 1)/2$ for $t \in [0,1]$ 
    are extreme points of the convex set of
	normalized symmetric operator monotone function from $\R^+$ to $\R^+$.
\end{corollary}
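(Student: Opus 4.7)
The plan is to translate the problem via the affine bijection of Corollary \ref{cor: normalized symmetric OMF} between normalized symmetric operator monotone functions on $\R^+$ and probability symmetric Borel measures on $[0,1]$. Since this map is affine and bijective, it sends extreme points to extreme points. It therefore suffices to show that for each $t \in [0,1]$ the symmetrized Dirac mass $\mu_t := \tfrac{1}{2}(\delta_t + \delta_{1-t})$ is an extreme point of the convex set $P_s$ of probability symmetric Borel measures on $[0,1]$, and then to verify that $\mu_t$ corresponds to the function $x \mapsto (1\,!_t\,x + x\,!_t\,1)/2$ under the integral representation \eqref{int rep of symmetric OM}.

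For extremality, suppose $\mu_t = \lambda \nu_1 + (1-\lambda)\nu_2$ with $\lambda \in (0,1)$ and $\nu_1, \nu_2 \in P_s$. Since $\nu_1, \nu_2 \geqs 0$ and their convex combination is supported on the set $\{t, 1-t\}$, each $\nu_i$ must be supported there as well. Writing $\nu_i = a_i \delta_t + b_i \delta_{1-t}$ with $a_i + b_i = 1$, the symmetry condition $\nu_i \Theta = \nu_i$ forces $a_i = b_i = 1/2$, so $\nu_1 = \nu_2 = \mu_t$. (In the degenerate case $t = 1/2$, the support collapses to $\{1/2\}$ and the only probability measure there is $\delta_{1/2}$.)

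Finally, substituting $\mu_t$ into \eqref{int rep of symmetric OM} gives
\begin{align*}
    f(x) &= \tfrac{1}{2} \int_{[0,1]} (1 \,!_s\, x) + (x \,!_s\, 1)\,d\mu_t(s) \\
         &= \tfrac{1}{4}\bigl[(1 \,!_t\, x) + (x \,!_t\, 1) + (1 \,!_{1-t}\, x) + (x \,!_{1-t}\, 1)\bigr].
\end{align*}
From the definition $a \,!_s\, b = [(1-s)a^{-1} + sb^{-1}]^{-1}$, one sees directly that $1 \,!_{1-t}\, x = x \,!_t\, 1$ and $x \,!_{1-t}\, 1 = 1 \,!_t\, x$, so the expression collapses to $\tfrac{1}{2}[(1 \,!_t\, x) + (x \,!_t\, 1)]$. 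The main obstacle, which is mild, is the extremality step for $t \neq 1/2$, where one must combine the support restriction with the symmetry constraint to pin down both weights; once that is in hand, the rest is a routine transport through the bijection of Corollary \ref{cor: normalized symmetric OMF}.
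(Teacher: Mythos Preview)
Your proposal is correct and follows essentially the same approach as the paper: reduce the question to the extremality of the symmetrized Dirac masses $(\delta_t+\delta_{1-t})/2$ among probability symmetric Borel measures on $[0,1]$, and then transport this through the affine bijection with normalized symmetric operator monotone functions. The paper's proof simply asserts the extremality of these measures as ``easy to see'' and invokes the bijective affine correspondence, whereas you supply the support-and-symmetry argument explicitly and also verify the identification of the function corresponding to $\mu_t$; the extra detail is welcome but does not constitute a different method.
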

\begin{proof}
    It is easy to see that the measures $(\delta_t + \delta_{1-t})/2$ for $t \in [0,1]$ 
    are extreme points of the convex set of
	probability symmetric Borel measures on $[0,1]$.
	Use this fact and the fact that the map sending operator monotone functions 
    to associated measures is bijective and affine.
\end{proof}

\section{Decomposition of operator monotone functions}

In this section, we decompose operator monotone functions from $\R^+$ to $\R^+$.

\begin{theorem} \label{thm: decomp of func}
    For each $f \in OM(\R^+)$, 
    there is a unique triple $(f_{ac},f_{sc},f_{sd})$ of operator monotone functions on $\R^+$ such that
\begin{align*}
    f = f_{ac} + f_{sc} + f_{sd}
\end{align*}
and
\begin{enumerate}
    \item[(i)] there are a countable set $D \subseteq [0,1]$ and a family
    						$\{a_{t}\}_{t \in D} \subseteq \R^+$
                 such that $\sum_{t \in D} a_{t} < \infty$ and for each $x \in \R^+$
            \begin{align}
                f_{sd}(x) = \sum_{\ld \in D} a_{t} (1 \,!_t\, x); \label{eq: formula of f_sd}
            \end{align}
    \item[(ii)]   there is a (unique $m$-a.e.) integrable function $g: [0,1] \to \R^+$ such that
            \begin{align}
                f_{ac} (x) = \int_{[0,1]} g(t)(1 \,!_t\, x) \,dm(t), \label{eq: formula of f_ac}
                \quad x \in \R^+;
            \end{align}
    \item[(iii)]  its associated measure of $f_{sc}$ is continuous and mutually singular to $m$.
\end{enumerate}
Moreover, the associated measure of $f_{sd}$ is given by $\sum_{t \in D} a_{t} \,\delta_{t}$.
\end{theorem}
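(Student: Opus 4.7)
The plan is to lift the problem to the associated measure of $f$ via Theorem \ref{thm: int rep of f}, and then apply standard measure-theoretic decompositions on $[0,1]$. Let $\mu$ denote the associated measure of the given $f \in OM(\R^+)$.

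First, apply the Lebesgue decomposition of $\mu$ relative to Lebesgue measure $m$ to write $\mu = \mu_1 + \mu_s$ with $\mu_1 \ll m$ and $\mu_s \perp m$. Then split the singular part as $\mu_s = \mu_{sd} + \mu_{sc}$ into its purely atomic and atomless pieces, which is possible since $\mu_s$ is a finite Borel measure on $[0,1]$; both $\mu_{sd}$ and $\mu_{sc}$ remain singular to $m$. Define $f_{ac}$, $f_{sc}$, $f_{sd}$ to be the operator monotone functions whose associated measures are $\mu_1$, $\mu_{sc}$, $\mu_{sd}$, respectively. Since the correspondence $\mu \mapsto f$ of Theorem \ref{thm: int rep of f} is affine, the additive decomposition $\mu = \mu_1 + \mu_{sc} + \mu_{sd}$ transfers to $f = f_{ac} + f_{sc} + f_{sd}$.

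Next, verify the three explicit forms. For (i), enumerate the atoms of $\mu_{sd}$ as a countable set $D \subseteq [0,1]$ with masses $a_t = \mu_{sd}(\{t\})$; finiteness of $\mu$ gives $\sum_{t \in D} a_t = \mu_{sd}([0,1]) < \infty$, and combining Example \ref{ex: assoc measure of HM} with affinity produces $f_{sd}(x) = \sum_{t \in D} a_t (1 \,!_t\, x)$. For (ii), the Radon--Nikodym theorem yields a nonnegative $m$-integrable density $g$ on $[0,1]$ with $d\mu_1 = g\, dm$, which plugs into the integral representation of Theorem \ref{thm: int rep of f} to give the required formula for $f_{ac}$; the density is unique $m$-a.e. by the standard uniqueness of Radon--Nikodym derivatives. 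For (iii), $\mu_{sc}$ is atomless and mutually singular to $m$ by its construction.

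For uniqueness, suppose $f = \tilde{f}_{ac} + \tilde{f}_{sc} + \tilde{f}_{sd}$ is another decomposition satisfying (i)--(iii). Their associated measures $\tilde{\mu}_{ac}, \tilde{\mu}_{sc}, \tilde{\mu}_{sd}$ are, respectively, absolutely continuous, continuous-and-singular, and purely atomic with respect to $m$, and sum to $\mu$ by injectivity of $\mu \mapsto f$. The uniqueness of the Lebesgue decomposition together with the uniqueness of the atomic/atomless split of the singular part then forces agreement with $\mu_1$, $\mu_{sc}$, $\mu_{sd}$, and the bijection transfers this back to the function level. The main point is that Theorem \ref{thm: int rep of f} does all the heavy lifting: once it is in place, the argument reduces to routine measure theory, and the only care needed is to check that each component measure is a finite nonnegative Borel measure on $[0,1]$ so that Theorem \ref{thm: int rep of f} applies separately to produce each operator monotone summand.
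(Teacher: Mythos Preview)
Your proposal is correct and follows essentially the same approach as the paper: pass to the associated measure via Theorem~\ref{thm: int rep of f}, apply the Lebesgue/atomic decomposition of $\mu$, use Radon--Nikodym for the absolutely continuous part and Example~\ref{ex: assoc measure of HM} for the discrete part, and deduce uniqueness from the bijection together with uniqueness of the measure decomposition. The only cosmetic difference is that the paper writes the three-way decomposition $\mu=\mu_{ac}+\mu_{sc}+\mu_{sd}$ in one step rather than first splitting off the singular part, and it explicitly records the bound $1\,!_t\,x \leqs \max(1,x)$ to justify convergence of the series for $f_{sd}$.
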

\begin{proof}
    Let $\mu$ be the associated measure of $f$. Then there is a unique triple $(\mu_{ac},\mu_{sc},\mu_{sd})$ 
    of finite Borel measures on $[0,1]$ such that $\mu= \mu_{ac} + \mu_{sc} + \mu_{sd}$ where
(1) $\mu_{sd}$ is a discrete measure, (2) $\mu_{ac}$ is absolutely continuous with respect to $m$  and
(3) $\mu_{sc}$ is a continuous measure mutually singular to $m$.
We can write
\begin{align*}
	f(x) = \int_{[0,1]} 1\,!_t\, x\, d\mu_{ac} + \int_{[0,1]} 1\,!_t\, x\, d\mu_{sd}
	+ \int_{[0,1]} 1\,!_t\, x\, d\mu_{sc}, \quad x \in \R^+.
\end{align*}
Since $\mu_{ac}, \mu_{sd}$ and $\mu_{sc}$ are finite Borel measures on $[0,1]$, they give rise
to $f_{ac}, f_{sd},f_{sc}$ in $OM(\R^+)$, given by
\begin{align*}
	f_{ac} (x) &= \int_{[0,1]} 1\,!_t\, x\, d\mu_{ac},  \\
	f_{sd} (x) &= \int_{[0,1]} 1\,!_t\, x\, d\mu_{sd}, \\
	f_{sc} (x) &= \int_{[0,1]} 1\,!_t\, x\, d\mu_{sc}.
\end{align*}
The condition (1) means precisely that there are a countable set $D \subseteq [0,1]$ and a family
$\{a_{\ld}\}_{\ld \in D}$ in $\R^+$
such that $\sum_{\ld \in D} a_{\ld} < \infty$ and
$
    \mu_{sd} = \sum_{t \in D} a_{t} \delta_{t}.
$
Hence,
$
	f_{sd} (x) = \sum_{t \in D} a_{t} (1 \,!_t\, x)
$
for $x \in \R^+$.
Note that this series converges since, for each $x \in \R^+$,
\begin{align*}
	\sum_{t \in D} a_{t} (1 \,!_t\, x) \leqs \sum_{t \in D} a_{t} \max( 1,x ) < \infty.
\end{align*}
The condition (2) means precisely the condition (ii) by Radon-Nikodym theorem.
The uniqueness of $(f_{ac},f_{sc},f_{sd})$ follows from the uniqueness of $(\mu_{ac},\mu_{sc},\mu_{sd})$
and the correspondence between operator monotone functions and measures.
The measure $\sum_{t \in D} a_{t} \,\delta_{t}$ is associated to $f_{sd}$ since
the associated measure of $x \mapsto 1\,!_t\,x$ is $\delta_t$ for each $t \in [0,1]$ by Example
\ref{ex: assoc measure of HM}.
\end{proof}

This theorem says that every $f \in OM(\R^+)$ consists of three parts.
The ``singularly discrete part" $f_{sd}$ appears in terms of weighted harmonic means.
Indeed, it is a countable sum of $x \mapsto 1\,!_t x$ for $t \in [0,1]$, given by \eqref{eq: formula of f_sd}.
Such type of functions include the straight lines with positive slopes, the constant functions
and the multiple functions $x \mapsto kx$.
The ``absolutely continuous part" $f_{ac}$ arises as an integral
with respect to Lebesgue measure, given by the formula \eqref{eq: formula of f_ac}.
Typical examples of such functions are the following functions:
\begin{align*}
    \log{(1+x)}, \quad x^{\ap}, \quad \frac{x-1}{\log{x}}, \quad \frac{x}{x-1} \log{x},
\end{align*}
provided in Examples \ref{ex: assoc measure of log}, \ref{ex: assoc measure of weight geometric}  
and \ref{ex: assoc measure of LM}.
The ``singularly continuous part" $f_{sc}$ has an integral representation with respect to a continuous measure
mutually singular to Lebesgue measure.
Hence (aside singular continuous part) this theorem gives an explicit description of arbitrary operator monotone
functions on the nonnegative reals.

\begin{proposition} \label{prop: properties of f ac}
Consider the operator monotone function $f_{ac}$ defined by \eqref{eq: formula of f_ac}. Then
    \begin{itemize}
        \item   it is normalized if and only if the average of the density function $g$ is $1$, i.e.
                $
	               \int_0^1 g(t)\,dt = 1.
                $
        \item   it is symmetric if and only if $g \circ \Theta = g$.
    \end{itemize}
\end{proposition}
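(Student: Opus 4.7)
The plan is to treat the two bullets separately, and in each case reduce the claim to a simple computation using the integral representation \eqref{eq: formula of f_ac} and the results already established in Section 2.

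For the first bullet, I would just evaluate $f_{ac}$ at $x=1$. Since the weighted harmonic mean satisfies $1\,!_t\,1 = [(1-t)\cdot 1 + t \cdot 1]^{-1} = 1$ for every $t \in [0,1]$, the integrand in \eqref{eq: formula of f_ac} at $x=1$ reduces to $g(t)$, giving
\[
f_{ac}(1) = \int_{[0,1]} g(t) \, dm(t) = \int_0^1 g(t)\, dt.
\]
Hence $f_{ac}(1)=1$ if and only if the displayed average equals $1$.

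For the second bullet, I would appeal to Corollary \ref{cor: symmetric OMF}: an operator monotone function on $\R^+$ is symmetric if and only if its associated measure is symmetric. The associated measure of $f_{ac}$ is precisely $d\mu_{ac}(t) = g(t)\,dm(t)$; this is read off from \eqref{eq: formula of f_ac} and the uniqueness in Theorem \ref{thm: int rep of f}. So the task becomes: show that the measure $g\,dm$ is invariant under $\Theta$ if and only if $g\circ\Theta = g$ (Lebesgue a.e.). The change of variables $s=1-t$ (note that $m$ is $\Theta$-invariant) yields, for every Borel set $E\subseteq[0,1]$,
\[
(\mu_{ac}\Theta)(E) = \int_{\Theta(E)} g(t)\,dm(t) = \int_E g(1-s)\,dm(s) = \int_E (g\circ\Theta)(s)\,dm(s).
\]
Thus $\mu_{ac}\Theta$ has density $g\circ\Theta$ with respect to $m$, and uniqueness of Radon--Nikodym derivatives shows $\mu_{ac}\Theta=\mu_{ac}$ if and only if $g\circ\Theta=g$ almost everywhere.

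Neither step is genuinely hard; the only point requiring a bit of care is recognizing that the equality $g\circ\Theta = g$ can only be asserted $m$-a.e., matching the uniqueness clause in Theorem \ref{thm: decomp of func}(ii). Once this is flagged, the proof writes itself in a few lines.
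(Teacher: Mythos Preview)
Your proof is correct and follows the same route as the paper, which simply cites Corollaries~\ref{cor: mean iff prob meas} and~\ref{cor: symmetric OMF}; you have just unpacked those references (evaluating at $x=1$ for the first bullet, and translating measure-symmetry into $g\circ\Theta=g$ $m$-a.e.\ via Radon--Nikodym for the second). Your remark that the equality $g\circ\Theta=g$ holds only $m$-a.e.\ is a worthwhile clarification the paper leaves implicit.
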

\begin{proof}
    Use Corollaries \ref{cor: mean iff prob meas} and \ref{cor: symmetric OMF}.
\end{proof}

We say that a density function $g:[0,1] \to \R^+$ is \emph{symmetric} if $g \circ \Theta = g$.
Next, we decompose a normalized operator monotone function as a convex combination 
of normalized operator monotone functions.

\begin{corollary} \label{cor: normalized OMF decom}
    Let $f \in OM(\R^+)$ be normalized.
Then there are unique triples $(\widetilde{f_{ac}},\widetilde{f_{sc}},\widetilde{f_{sd}})$
of normalized operator monotone functions or zero functions and $(k_{ac}, k_{sc}, k_{sd})$ 
of real numbers in $[0,1]$ such that
\begin{align*}
    f = k_{ac} \widetilde{f_{ac}} + k_{sc} \widetilde{f_{sc}} + k_{sd} \widetilde{f_{sd}},
    \quad k_{ac} + k_{sc} + k_{sd} = 1
\end{align*}
and
\begin{enumerate}
    \item[(i)] there are a countable set $D \subseteq [0,1]$ and a family
    						$\{a_{t}\}_{t \in D} \subseteq [0,1]$
                 such that $\sum_{t \in D} a_{t} =1$ and 
            $
                f_{sd}(x) = \sum_{\ld \in D} a_{t} (1 \,!_t\, x) 
            $ for each $x \in \R^+$;
    \item[(ii)]   there is a (unique $m$-a.e.) integrable function $g: [0,1] \to \R^+$ with average $1$ such that
            $
                f_{ac} (x) = \int_{[0,1]} g(t)(1 \,!_t\, x) \,dm(t)   
            $ for $x \in \R^+$;
    \item[(iii)]  its associated measure of $f_{sc}$ is continuous and mutually singular to $m$.
\end{enumerate}
\end{corollary}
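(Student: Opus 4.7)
The plan is to reduce Corollary \ref{cor: normalized OMF decom} to Theorem \ref{thm: decomp of func} by normalizing each component separately. First, I would apply Theorem \ref{thm: decomp of func} to obtain the unique decomposition $f = f_{ac} + f_{sc} + f_{sd}$ into operator monotone summands whose associated measures are respectively absolutely continuous, singular continuous, and discrete with respect to Lebesgue measure $m$.

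The key observation is that $1 \,!_t\, 1 = 1$ for every $t \in [0,1]$, so any $h \in OM(\R^+)$ with associated measure $\sigma$ satisfies $h(1) = \sigma([0,1])$. I would therefore set $k_{\bullet} := f_{\bullet}(1)$ for $\bullet \in \{ac, sc, sd\}$; these lie in $[0,1]$, and by Corollary \ref{cor: mean iff prob meas} they sum to $f(1) = 1$. For each index, I would define $\widetilde{f_{\bullet}} := f_{\bullet}/k_{\bullet}$ when $k_{\bullet} > 0$ and $\widetilde{f_{\bullet}} := 0$ when $k_{\bullet} = 0$ (in which case $f_{\bullet}$ itself vanishes, because its associated measure does). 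The resulting $\widetilde{f_{\bullet}}$ is then either the zero function or a normalized member of $OM(\R^+)$, and $f = k_{ac}\widetilde{f_{ac}} + k_{sc}\widetilde{f_{sc}} + k_{sd}\widetilde{f_{sd}}$ is immediate.

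The conditions (i)--(iii) are then inherited from the corresponding parts of Theorem \ref{thm: decomp of func} by dividing through: the coefficients $a_t/k_{sd}$ in (i) sum to $1$; the density $g/k_{ac}$ in (ii) has integral $1$ over $[0,1]$ (equivalently average $1$), which matches the first bullet of Proposition \ref{prop: properties of f ac}; and the measure $\mu_{sc}/k_{sc}$ remains continuous and mutually singular to $m$. Uniqueness follows by running the argument backwards: any valid triple with weights $(k_{ac}, k_{sc}, k_{sd})$ gives a three-piece decomposition of $f$ satisfying the hypotheses of Theorem \ref{thm: decomp of func}, whose uniqueness forces $k_{\bullet}\widetilde{f_{\bullet}} = f_{\bullet}$; evaluation at $x = 1$ (using $\widetilde{f_{\bullet}}(1) \in \{0,1\}$) then recovers $k_{\bullet}$ and hence $\widetilde{f_{\bullet}}$.

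I do not anticipate a genuine obstacle: the whole statement is a normalization wrapper around Theorem \ref{thm: decomp of func}. The only care needed is bookkeeping around the degenerate cases where some $k_{\bullet}$ vanishes, and the convention $\widetilde{f_{\bullet}} = 0$ handles this cleanly and is consistent with $f_{\bullet} = 0$ in that situation.
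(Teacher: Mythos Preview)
Your proposal is correct and follows essentially the same normalization argument as the paper: the paper phrases it on the measure side, setting $k_\bullet = \mu_\bullet([0,1])$ and $\widetilde{\mu_\bullet} = \mu_\bullet/k_\bullet$, while you phrase it on the function side via $k_\bullet = f_\bullet(1)$, which is the same quantity by your observation $1\,!_t\,1 = 1$. If anything, your treatment of the degenerate cases $k_\bullet = 0$ and of uniqueness is more explicit than the paper's, which assumes all three pieces are nonzero and does not spell out the uniqueness step.
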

\begin{proof}
	Let $\mu$ be the associated probability measure of $f = f_{ac} + f_{sd} + f_{sc}$
	and write $\mu = \mu_{ac} + \mu_{sd} + \mu_{sc}$.
	Suppose that $\mu_{ac}$, $\mu_{sd}$ and $\mu_{sc}$ are nonzero.
	Then
	\begin{align*}
		\mu = \mu_{ac} ([0,1]) \frac{\mu_{ac}}{\mu_{ac} ([0,1])} + \mu_{sd} ([0,1]) \frac{\mu_{sd}}{\mu_{sd} ([0,1])}
				+ \mu_{sc} ([0,1]) \frac{\mu_{sc}}{\mu_{sc} ([0,1])}.
	\end{align*}
	Set
	\begin{align*}
		\widetilde{\mu_{a}} &= \frac{\mu_{ac}}{\mu_{ac} ([0,1])}, \quad \widetilde{\mu_{sd}}
        = \frac{\mu_{sd}}{\mu_{sd} ([0,1])},
			\quad \widetilde{\mu_{sc}} = \frac{\mu_{sc}}{\mu_{sc} ([0,1])}, \\
		k_{ac} &= \mu_{ac} ([0,1]), \quad k_{sd} = \mu_{sd} ([0,1]), \quad k_{sc} = \mu_{sc} ([0,1]).
	\end{align*}
	Define $\widetilde{f_{ac}}, \widetilde{f_{sd}}, \widetilde{f_{sc}}$ to be the functions corresponding to
	the measures $\widetilde{\mu_{ac}}, \widetilde{\mu_{sd}}, \widetilde{\mu_{sc}}$, respectively.
	Now, apply Theorem \ref{thm: decomp of func} and Proposition
    \ref{prop: properties of f ac}.
\end{proof}

We can decompose a symmetric operator monotone function
as a nonnegative linear combination of symmetric operator monotone functions as follows:

\begin{corollary} \label{cor: symmetric OMF decom}
    Let $f \in OM(\R^+)$ be symmetric.
Then there is a unique triple $(f_{ac},f_{sc},f_{sd})$ of symmetric operator monotone functions such that
\begin{align*}
    f = f_{ac} + f_{sc} + f_{sd} \label{eq: sym con decomposition}
\end{align*}
and
\begin{enumerate}
    \item[(i)] 
    there are a countable set $D \subseteq [0,1]$ and a family $\{a_{t}\}_{t \in D} \subseteq \R^+$ such that
    $a_t = a_{1-t}$ for all $t \in D$, $\sum_{t\in D} a_t < \infty$ and
            $
                f_{sd}(x) = \sum_{t \in D} a_{t} (1 \,!_{t}\, x)
            $ for each $x \in \R^+$;
    \item[(ii)]   there is a (unique $m$-a.e.) symmetric integrable function $g: [0,1] \to \R^+$ such that
            \begin{align*}
                f_{ac} (x) = \frac{1}{2} \int_{[0,1]} g(t)
                ( 1 \,!_{t}\, x \,+\, x \,!_{t}\, 1) \,dm(t), \quad x \geqs 0;
            \end{align*}
    \item[(iii)] its associated measure of $f_{sc}$ is continuous and mutually singular to $m$.
\end{enumerate}
\end{corollary}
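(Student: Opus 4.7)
The plan is to reduce everything to the measure side and then combine Theorem \ref{thm: decomp of func} with Corollary \ref{cor: symmetric OMF}. Let $\mu$ be the associated measure of $f$, and write its Lebesgue-type decomposition $\mu = \mu_{ac} + \mu_{sd} + \mu_{sc}$ as in Theorem \ref{thm: decomp of func}. By Corollary \ref{cor: symmetric OMF}, the symmetry of $f$ is equivalent to $\mu \Theta = \mu$, where $\Theta(t) = 1-t$. The key observation is that Lebesgue measure on $[0,1]$ is invariant under $\Theta$, and that $\Theta$ is a measure-preserving involution of $[0,1]$; consequently, pushing forward by $\Theta$ preserves each of the three ``types'' of measure.

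In detail, I would verify that the map $\nu \mapsto \nu\Theta$ sends discrete measures to discrete measures, absolutely continuous measures (with respect to $m$) to absolutely continuous measures, and continuous measures mutually singular to $m$ to continuous measures mutually singular to $m$. For the absolutely continuous part, if $\mu_{ac} \ll m$ then $\mu_{ac}\Theta \ll m\Theta = m$; for the singular continuous part, if $\mu_{sc}$ is continuous (no atoms) and concentrated on a Borel set $N$ with $m(N)=0$, then $\mu_{sc}\Theta$ is still atomless and is concentrated on $\Theta(N)$, which has $m$-measure zero. Hence $\mu\Theta = \mu_{ac}\Theta + \mu_{sd}\Theta + \mu_{sc}\Theta$ is another Lebesgue decomposition of the same measure $\mu$. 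By the uniqueness of that decomposition, $\mu_{ac}\Theta = \mu_{ac}$, $\mu_{sd}\Theta = \mu_{sd}$, and $\mu_{sc}\Theta = \mu_{sc}$, so each of the three pieces is symmetric.

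Having established this, I invoke Corollary \ref{cor: symmetric OMF} again to conclude that $f_{ac}, f_{sd}, f_{sc}$ are all symmetric operator monotone functions, and each admits the integral representation \eqref{int rep of symmetric OM} with respect to its (symmetric) associated measure. For condition (i), the symmetry of $\mu_{sd} = \sum_{t \in D} a_t \delta_t$ means $\sum_{t \in D} a_t \delta_{1-t} = \sum_{t \in D} a_t \delta_t$; after replacing $D$ with $D \cup \Theta(D)$ (extending coefficients by zero), one reads off $a_t = a_{1-t}$. For (ii), the Radon--Nikodym derivative $g = d\mu_{ac}/dm$ satisfies $g \circ \Theta = g$ almost everywhere because $m$ is $\Theta$-invariant and $\mu_{ac}$ is $\Theta$-invariant; the integral formula for $f_{ac}$ then follows directly from \eqref{int rep of symmetric OM}. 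Condition (iii) is immediate from the symmetry of $\mu_{sc}$ and the properties already used. Uniqueness of the triple follows from the uniqueness in Theorem \ref{thm: decomp of func} together with the bijective correspondence between functions and measures.

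The step I expect to require the most care is verifying that $\Theta$-push-forward preserves the three ``types'' simultaneously, so that the uniqueness of the Lebesgue decomposition can be applied to conclude the symmetry of each piece. Once that is in hand, every remaining assertion is a translation between the measure and function sides using earlier results.
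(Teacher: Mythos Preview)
Your proposal is correct and follows essentially the same route as the paper's proof: decompose the associated measure, check that push-forward by $\Theta$ preserves each of the three types, invoke uniqueness of the Lebesgue decomposition to conclude each piece is $\Theta$-invariant, and then translate back via Corollary \ref{cor: symmetric OMF} and Theorem \ref{thm: decomp of func}. If anything, you spell out the derivation of $a_t = a_{1-t}$ and $g\circ\Theta = g$ in more detail than the paper does.
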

\begin{proof}
	Let $\mu$ be the associated measure of $f$ and decompose
	$\mu = \mu_{ac} + \mu_{sd} + \mu_{sc}$ where
	$\mu_{ac} \ll m$, $\mu_{sd}$ is a discrete measure, $\mu_{sc}$ is continuous
	and $\mu_{sc} \perp m$.
	Then $\mu \Theta = \mu_{ac} \Theta + \mu_{sd} \Theta + \mu_{sc} \Theta$.
	It is straightforward to show that
	$\mu_{ac} \Theta \ll m$, $\mu_{sd} \Theta$ is discrete, $\mu_{sc} \Theta$ is continuous and $\mu_{sc} \Theta \perp m$.
	By Corollary \ref{cor: symmetric OMF}, $\mu \Theta = \mu$.
	The uniqueness of the decomposition of measures implies that $\mu_{ac} \Theta = \mu_{ac}, \mu_{sd} \Theta = \mu_{sd}$
	and $\mu_{sc} \Theta = \mu_{sc}$.
	Again, Corollary \ref{cor: symmetric OMF} tells us that $f_{ac}, f_{sd}$
	and $f_{sc}$ are symmetric operator monotone functions.
	Finally, apply Theorem \ref{thm: decomp of func} and Proposition
    \ref{prop: properties of f ac}.
\end{proof}

A decomposition of any normalized symmetric operator monotone function
as a convex combination of such ones is also obtained
by the normalizing process in the proof of 
Corollary \ref{cor: normalized OMF decom}.

\bibliographystyle{amsplain}

\end{document}